\definecolor{cite}{rgb}{0.50,0.00,1.00}
\definecolor{url}{rgb}{0.00,0.50,0.75}
\definecolor{link}{rgb}{0.00,0.00,0.50}
\theoremstyle{definition} %标题与编号为黑体, 正文为正常字体
\newtheorem{Unity}{Unity}[section] %\newtheorem{定理环境名}{标题}[主计数器名]
\newtheorem*{defn*}{Definition} %\newtheorem*{定理环境名}[已定义定理环境名]{标题} 手动编号, 不自动编号
\newtheorem{defn}[Unity]{Definition} %\newtheorem{定理环境名}[已定义定理环境名]{标题} 与当前环境共用同一个序号计数器
\theoremstyle{plain} %标题与编号为黑体, 正文为斜体
\newtheorem*{thm*}{Theorem}
\newtheorem{thm}[Unity]{Theorem}
\newtheorem{prop}[Unity]{Proposition}
\newtheorem*{cor*}{Corollary}
\newtheorem{cor}[Unity]{Corollary}
\newtheorem{lem}[Unity]{Lemma}
\theoremstyle{remark} %标题与编号为斜体, 正文为正常字体
\newtheorem*{rmk*}{Remark}
\newtheorem{rmk}[Unity]{Remark}
\newtheorem{exmp}[Unity]{Example}
\numberwithin{Unity}{section}%\numberwithin{计数器}{主计数器}
\begin{document}
\title{The $d$-very ampleness of adjoint line bundles on quasi-elliptic surfaces}
\author[Yongming Zhang]{Yongming Zhang}
\email{zhangym@cumt.edu.cn}
\address{School of Mathematics, China University of Mining and Technology, Xuzhou, 221116, P. R. of China}
\maketitle
\begin{abstract}
In this paper, we give a numerical criterion of Reider-type for the $d$-very ampleness of the adjoint line bundles on quasi-elliptic surfaces, and meanwhile we give a new proof of the vanishing theorem on quasi-elliptic surfaces emailed from Langer and show that it is the optimal version.
\end{abstract}
\section{Introduction}

Let $X$ be a projective algebraic variety defined over an algebraically closed field $k$. Let $Z$ be a $0$-dimensional subscheme of $X$ which is called $0$-cycle of $X$. For an integer $d\geq0$, a line bundle $L$ on $X$ is called \emph{$d$-very ample} if for any $0$-cycle $Z$ with length $ Z\leq d+1$, the restriction map $$\Gamma(X,L)\longrightarrow\Gamma(Z,L\mid_Z)$$ is surjective. Note that $0$-very ampleness and $1$-very ampleness is equivalent to being generated by global sections and being very ample respectively.

Let $X^{[d]}$ be the Hilbert scheme of points on $X$ of length $d$. If $L$ is $d$-very ample then the restriction map associates to every $0$-cycle $Z$ of length $d+1$ a subspace of $H^0 (X,L)^*$ of dimension $d+1$ and this is indeed a morphism $$\phi_d:X^{[d+1]}\rightarrow Grass(d+1, H^0(X,L)^*).$$
And it is proved that $\phi_d$ is an embedding if and only if $L$ is $d+1$-very ample (see \cite{CG90}). Thus the $d$-very ampleness is geometrically a natural generalization of the usual notation of very ampleness.

Using Reider's method (\cite{Reider1988}), Beltrametti and Sommese  obtained a useful numerical criterion for the $d$-very ampleness of the adjoint line bundles in the case of surfaces in characteristic zero.
\begin{thm}[\cite{BS91}]\label{thm_BS}
Let $L$ be a nef line bundle on a complex smooth projective surface $X$ and suppose that $(L^2)\geq4r+5$. Then either $\mathcal{O}_X(K_X+L)$ is $r$-very ample or there exists an effective divisor $D$ containing some $0$-dimensional scheme of length $\leq r+1$ along which $r$-very ampleness fails, such that a power of the line bundle $L-2D$ has sections and
$$(D\cdot L)-r-1\leq (D^2)<\frac{1}{2}(D\cdot L)<r+1.$$
\end{thm}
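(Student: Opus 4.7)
My plan is to adapt Reider's method to the higher-jet setting, combining Serre's construction with Bogomolov's instability theorem. First I translate the failure of $r$-very-ampleness into a cohomological statement: if $K_X+L$ is not $r$-very ample, choose a $0$-dimensional subscheme $Z \subset X$ of minimal length $\ell \leq r+1$ for which $H^0(X, K_X + L) \to H^0(Z, (K_X+L)|_Z)$ fails to be surjective. The long exact sequence attached to $0 \to \mathcal{I}_Z \otimes (K_X + L) \to K_X + L \to (K_X + L)|_Z \to 0$ gives $H^1(X, \mathcal{I}_Z(K_X+L)) \neq 0$, which is dual by Serre to $\mathrm{Ext}^1(\mathcal{I}_Z \otimes L, \mathcal{O}_X) \neq 0$. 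The minimality of $Z$ secures the Cayley--Bacharach condition, so any nonzero extension class produces a rank-$2$ vector bundle $E$ sitting in
\[
0 \to \mathcal{O}_X \to E \to \mathcal{I}_Z \otimes L \to 0,
\]
with $c_1(E) = L$ and $c_2(E) = \ell \le r+1$.

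Next, the hypothesis $L^2 \ge 4r + 5$ yields $\Delta(E) := c_1(E)^2 - 4c_2(E) \ge 1 > 0$, so $E$ is Bogomolov-unstable. A maximal destabilizing sub-line bundle $\mathcal{O}_X(M) \hookrightarrow E$ then fits into
\[
0 \to \mathcal{O}_X(M) \to E \to \mathcal{O}_X(L - M) \otimes \mathcal{I}_W \to 0
\]
with $(2M - L)^2 > 0$ and $(2M-L)\cdot H > 0$ for every ample $H$. The composition $\mathcal{O}_X(M) \hookrightarrow E \twoheadrightarrow \mathcal{I}_Z \otimes L$ cannot vanish, for otherwise $\mathcal{O}_X(M)$ would lift to $\mathcal{O}_X$, forcing $-M$ to be effective and contradicting $(2M-L)\cdot H > 0$ together with the nefness of $L$. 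Hence the composition is an injection, so $D := L - M$ is represented by an effective divisor containing $Z$, and $L - 2D = 2M - L$ lies in the positive cone; in particular some multiple of $L - 2D$ has sections.

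Finally, the numerical inequalities are read off the Harder--Narasimhan filtration. The identity $c_2(E) = M \cdot (L - M) + \mathrm{length}(W)$ combined with $c_2(E) = \ell \le r + 1$ yields $(L - D)\cdot D \le r + 1$, i.e.\ the leftmost inequality $D \cdot L - D^2 \le r + 1$. Once the central inequality $D^2 < \tfrac{1}{2}(D\cdot L)$ is established, the rightmost $\tfrac{1}{2}(D\cdot L) < r + 1$ follows immediately by adding it to the leftmost. The step I expect to be the main obstacle is establishing the central inequality, equivalently $(L-2D)\cdot D > 0$: Bogomolov directly produces $(L-2D)^2 > 0$ and $(L-2D)\cdot H > 0$ for ample $H$, but one must additionally exploit the effectiveness of $D$ and the Hodge index theorem on the sublattice spanned by $L$ and the irreducible components of $D$ to rule out the configuration $D \cdot L \le 2D^2$. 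This is precisely the intersection-theoretic refinement of Reider's original analysis carried out by Beltrametti--Sommese.
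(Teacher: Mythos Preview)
The paper does not give its own proof of this statement: Theorem~\ref{thm_BS} is quoted from \cite{BS91} as background, with no argument supplied. There is therefore nothing in the paper to compare your proposal against.

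That said, your outline is the standard Reider--Beltrametti--Sommese argument and is essentially correct. One remark on the step you flag as the main obstacle: the strict inequality $D^2 < \tfrac{1}{2}(D\cdot L)$ follows more directly than you suggest. From $(L-2D)\in C_{++}$ and $L$ nef and big one gets $(L-2D)\cdot L \ge 0$, i.e.\ $L^2 \ge 2(L\cdot D)$; combined with the Hodge index inequality $L^2 D^2 \le (L\cdot D)^2$ this gives $D^2 \le \tfrac{1}{2}(L\cdot D)$, and equality would force $D$ numerically proportional to $L$, hence $L-2D\equiv 0$, contradicting $(L-2D)^2>0$. No component-by-component analysis of $D$ is needed.
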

In positive characteristic, by the results of N.I.Shepherd-Barron (\cite{SB91}), Theorem \ref{thm_BS} also works directly on surfaces neither of general type nor quasi-elliptic of Kodaira dimension $1$, and for the surface of general type, T. Nakashima used N.I.Shepherd-Barron's results to obtain a numerical criterion for the $d$-very ampleness of the adjoint line bundles (\cite{NT93}). Then H. Terakawa (\cite{TH99}) improved it and collected all such results on surfaces together.
\begin{thm}[\cite{TH99}]
Let $X$ be a nonsingular projective surface defined over an algebraically closed field of characteristic $p>0$. Let $L$ be a nef line bundle on
$X$. Assume that
$l:= L^2-4d-5\geq0$
and one of the following situations holds:
\begin{enumerate}
  \item $X$ is not of general type and further not quasi-elliptic of Kodaira dimension $1$;
  \item $X$ is of general type with minimal model $X^{\prime}$, $p\geq3$ and $l>(K_{X^{\prime}}^2)$;
  \item $X$ is of general type with minimal model $X^{\prime}$, $p=2$ and $l>{\rm max}\{(K_{X^{\prime}}^2),(K_{X^{\prime}}^2)-3\chi(\mathcal {O}_X)+2\}$.
\end{enumerate}
Then either $\mathcal{O}_X(K_X+L)$ is $d$-very ample or there exists an effective divisor $D$ containing some $0$-dimensional scheme of length $\leq d+1$ along which $d$-very ampleness fails, such that  $L-2D$ is $\mathbb{Q}$-effective and
$$(D\cdot L)-d-1\leq (D^2)<\frac{1}{2}(D\cdot L)<d+1.$$
\end{thm}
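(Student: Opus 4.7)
The plan is to adapt the Reider--Bogomolov machinery used by Beltrametti and Sommese in Theorem \ref{thm_BS} to positive characteristic, substituting Shepherd-Barron's instability theorems for the usual Bogomolov inequality. First I would argue by contradiction: assuming $K_X+L$ is not $d$-very ample, there is a $0$-dimensional subscheme $Z\subset X$ with $\mathrm{length}(Z)\le d+1$ for which $H^0(X,K_X+L)\to H^0(Z,(K_X+L)|_Z)$ fails to surject. Choosing $Z$ minimal with this property ensures the Cayley--Bacharach condition, and Serre duality then produces a nontrivial extension
\[
0\longrightarrow\mathcal{O}_X\longrightarrow E\longrightarrow\mathcal{I}_Z\otimes L\longrightarrow 0
\]
with $E$ locally free of rank $2$, so that $c_1(E)=L$ and $c_2(E)=\mathrm{length}(Z)\le d+1$. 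The assumption $L^2-4d-5\ge 0$ yields $c_1(E)^2-4c_2(E)\ge 1$, i.e.\ $E$ numerically violates the Bogomolov inequality.

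The central step is to promote this numerical violation to an actual destabilizing sub-line bundle, and this is where the three cases appear. In characteristic zero Bogomolov's theorem suffices, but in characteristic $p>0$ a sheaf with $c_1^2>4c_2$ need not be unstable, since the maximal destabilizing subsheaf may only appear after pulling back by Frobenius. Shepherd-Barron's results \cite{SB91} give Bogomolov instability unconditionally when $X$ is neither of general type nor quasi-elliptic of Kodaira dimension $1$---this is case (1)---and provide explicit correction terms on minimal surfaces of general type. The bounds $l>K_{X'}^2$ in case (2) and $l>\max\{K_{X'}^2,K_{X'}^2-3\chi(\mathcal{O}_X)+2\}$ in case (3) are exactly what is needed to absorb those correction terms and still conclude that $E$ itself is Bogomolov-unstable.

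Once instability is established, a saturated rank-$1$ subsheaf $\mathcal{O}_X(M)\hookrightarrow E$ exists with $2M-L$ lying in the positive cone, so in particular $L-2D$ is $\mathbb{Q}$-effective for $D:=L-M$. Composing $\mathcal{O}_X\hookrightarrow E$ with the quotient $E\twoheadrightarrow\mathcal{I}_W\otimes\mathcal{O}_X(D)$ (where $W$ is a $0$-dimensional residue) realises $D$ as an effective divisor, and after replacing $Z$ by its part supported on $D$ the restriction map still fails, so $D$ contains a length-$\le d+1$ scheme along which $d$-very ampleness fails. The numerical chain $(D,L)-d-1\le D^2<\tfrac12(D,L)<d+1$ is then extracted by a direct computation from $c_2(E)=D\cdot M+\mathrm{length}(W)\le d+1$ and the positivity of $2M-L$, exactly as in the classical argument.

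I expect the main obstacle to be the quantitative control needed in the second step: Shepherd-Barron's correction terms in the general-type case involve iterated Frobenius pullbacks of purported destabilizing subsheaves, and translating these into the clean numerical hypotheses $l>K_{X'}^2$ (or its characteristic-$2$ refinement on $X'$) requires careful bookkeeping of how $c_1^2-4c_2$ evolves under Frobenius and how this interacts with $K_{X'}^2$ on the minimal model. The other cases of the trichotomy are comparatively formal once Shepherd-Barron's dichotomy is in hand.
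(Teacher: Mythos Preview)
This theorem is quoted from Terakawa \cite{TH99} as background and is not proved in the present paper; the paper's own contribution is the analogous statement for quasi-elliptic surfaces (Theorem~\ref{d-veryample}). Your sketch is essentially the argument Terakawa gives and also parallels closely what the paper does in its quasi-elliptic case: construct the rank-$2$ bundle $E$ from a minimal bad cycle, use Shepherd-Barron's results to force Bogomolov instability of $E$ under the stated numerical margins, and read off $D$ and the chain of inequalities from the destabilizing sequence.

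One small point of comparison: the paper (and Terakawa) frames the bundle construction via Tyurin's $L$-stability (Theorem~\ref{T87} and Lemma~\ref{rank2vector}) rather than the Cayley--Bacharach condition, and that formulation takes $H^1(X,K_X+L)=0$ as an explicit hypothesis to pin the rank at~$2$. You gloss over this vanishing, but it is not free in positive characteristic; in Terakawa's three cases it is supplied by the same Shepherd-Barron package \cite{SB91} that yields instability, so your sketch is correct in spirit but should flag that the vanishing is part of what the hypotheses buy. Otherwise the outline is accurate and matches the literature.
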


The purpose of this note is to study the adjoint linear system on the remaining case that $X$ is a quasi-elliptic surface and at the same time we give a new proof of the vanishing theorem on it.
\begin{thm}[Theorem \ref{d-veryample}]
Let $X$ be a quasi-elliptic surface over an algebraically closed field $k$ of characteristic $p$, and $F$ be a general fibre of the quasi-elliptic fibration $f:X\rightarrow C$. Let $L$ be a nef and big divisor on $X$.
Assume that $$L^2>4(d+1)$$
for a nonnegative integer $d$ then we have the following descriptions.
\begin{enumerate}
  \item When $p=2$, we assume that $(L\cdot F)>2$ in addition. Then either $\mathcal{O}_X(K_X+L)$ is $d$-very ample or there exists an effective divisor $B$ containing a $0$-cycle $Z^{(2)}=F^{*}Z$, which is the Frobenius pull back of a $0$-cycle $Z$ of $\deg Z\leq  d+1$ where the $d$-very ampleness fails, such that $2L-B$ is $\mathbb{Q}$-effective and
      $$2(L\cdot B)-4\deg Z \leq (B^2)\leq (L\cdot B) \leq 4\deg Z;$$
  \item  When $p=3$, we assume that $(L\cdot F)>1$ in addition. Then either $\mathcal{O}_X(K_X+L)$ is $d$-very ample or there exists an effective divisor $B$ containing a $0$-cycle $Z^{(3)}=F^{*}Z$, which is the Frobenius pull back of a $0$-cycle $Z$ of $\deg Z\leq d+1$ where the $d$-very ampleness fails, such that $3L-2B$ is $\mathbb{Q}$-effective and
      $$3(L\cdot B)-9\deg Z \leq (B^2)\leq \frac{3(L\cdot B)}{2} \leq 9\deg Z.$$
\end{enumerate}
\end{thm}
\begin{thm}[Langer, Theorem \ref{vanishing theorem}]\label{vanishing}
Let $X$ be a quasi-elliptic surface over an algebraically closed field $k$ of characteristic $p$, and $F$ be a general fibre of the quasi-elliptic fibration $f:X\rightarrow C$. Let $L$ be a nef and big divisor on $X$.
\begin{itemize}
  \item If $ p=2$ and $(L\cdot F)>2$, then $H^1(X,L^{-1})=0$,
  \item If $ p=3$ and $(L\cdot F)>1$, then $H^1(X,L^{-1})=0$.
\end{itemize}
\end{thm}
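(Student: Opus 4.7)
The plan is to argue by contradiction. If $H^1(X, L^{-1}) \neq 0$, a nonzero extension class produces a non-split sequence
$$0 \longrightarrow L^{-1} \longrightarrow E \longrightarrow \mathcal{O}_X \longrightarrow 0,$$
where $E$ is a rank-$2$ vector bundle with $c_1(E) = -L$, $c_2(E) = 0$, and discriminant $\Delta(E) := c_1(E)^2 - 4 c_2(E) = L^2 > 0$ since $L$ is nef and big. The goal is then to extract from $E$ a divisor whose numerical behaviour contradicts the hypothesis on $(L \cdot F)$.

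By Langer's strengthening of Bogomolov's inequality in characteristic $p$, a rank-$2$ bundle of positive discriminant cannot be strongly Mumford-semistable with respect to any ample polarization. Let $n \ge 0$ be the smallest integer for which $F^{n*}E$ fails to be $H$-semistable for some ample $H$, where $F$ is the absolute Frobenius, and let $M \subset F^{n*}E$ be the maximal destabilizing line subbundle. Pulling back the extension by $F^n$ gives
$$0 \longrightarrow L^{-p^n} \longrightarrow F^{n*}E \longrightarrow \mathcal{O}_X \longrightarrow 0,$$
and composing $M \hookrightarrow F^{n*}E$ with the quotient $F^{n*}E \to \mathcal{O}_X$ leaves two options: either $M = \mathcal{O}_X(-D)$, or the composite is zero and $M \hookrightarrow L^{-p^n}$ gives $M = L^{-p^n}(-D)$, in each case for an effective divisor $D$ on $X$.

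Setting $\xi := 2 c_1(M) - c_1(F^{n*}E) = 2 c_1(M) + p^n L$, the maximal-destabilizer/Bogomolov-instability formalism produces $\xi \cdot H > 0$ for the destabilizing ample $H$ together with
$$\xi^2 \;\geq\; \Delta(F^{n*}E) \;=\; p^{2n} L^2 \;>\; 0.$$
I would then translate these into numerical inequalities for $D$, $L$, and $K_X$, and intersect with the general fibre class $F$, which satisfies $F^2 = 0$ and $K_X \cdot F = 0$ and modulo which every component of a special fibre is a non-negative multiple of $F$. This reduces everything to a bound purely on $(L \cdot F)$.

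The main obstacle is the case $n \ge 1$, where $E$ itself is Mumford-semistable and the destabilization is produced only by Frobenius pullback. By the Ekedahl--Langer correspondence such a Frobenius-destabilizing subsheaf comes from a $p$-closed $1$-foliation on $X$; for surfaces of Kodaira dimension $\leq 1$ this phenomenon is essentially confined to quasi-elliptic surfaces, where the relevant foliation is the one cutting out the fibres of $f$. A direct analysis then shows that $D$ must be supported on components of fibres of $f$, hence numerically a non-negative multiple of $F$. Substituting into $\xi \cdot F > 0$ forces $(L \cdot F) \leq 2$ in characteristic $2$ and $(L \cdot F) \leq 1$ in characteristic $3$, contradicting the hypothesis. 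The case $n = 0$, where $E$ is already Mumford-unstable, is the classical Reider argument and yields the same contradiction after intersection with $F$; the careful bookkeeping of the factor $p^n$ and of which of the two alternatives for $M$ occurs is what matches the sharp bounds $(L \cdot F) > 2$ and $(L \cdot F) > 1$.
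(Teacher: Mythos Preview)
Your overall architecture---produce a non-split extension, pull back by Frobenius until a destabilizing sub-line-bundle appears, and extract a numerical contradiction---is reasonable, but the argument as written has two genuine gaps that the paper's proof avoids by taking a different route.

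First, the appeal to an ``Ekedahl--Langer correspondence'' to conclude that the effective divisor $D$ is supported on fibres of $f$ is not a statement one can simply quote. What the canonical connection on $F^{n*}E$ actually gives, when $E$ is semistable and $F^{n*}E$ is not, is a nonzero $\mathcal{O}_X$-linear map from the maximal destabilizer $M$ into $(F^{n*}E/M)\otimes\Omega^1_X$; equivalently a nonzero section of $\Omega^1_X(-\xi)$. Turning this into the assertion that $D$ is vertical requires a concrete analysis of $\Omega^1_X$ on a quasi-elliptic surface, which you do not supply. Second, even granting that $D\equiv aF$ numerically, your concluding sentence does not follow: with $D\equiv aF$ one has $\xi=-2aF+p^nL$ and $\xi\cdot F=p^n(L\cdot F)>0$ automatically, so no bound on $(L\cdot F)$ drops out of ``$\xi\cdot F>0$''. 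In fact the inequality $\xi^2\ge p^{2n}L^2$ together with $D\equiv aF$ forces $a=0$, which only tells you that the $p^n$-th Frobenius pullback of the extension splits; it does not by itself produce the bounds $(L\cdot F)\le 2$ or $\le 1$.

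The paper reaches the contradiction by a genuinely different mechanism. Rather than analysing the destabilizing line bundle numerically, it takes the smallest $e$ for which $F^{e*}E$ splits, uses Shepherd-Barron's construction to produce an integral surface $Y\subset\mathbb{P}(E)$ with $\rho\colon Y\to X$ purely inseparable of degree $p^e$, and then computes $-K_Y\cdot\widetilde F=p^{e_0}(p^e-1)(L\cdot F)$ for the reduced general fibre $\widetilde F$ of $f\circ\rho$. The crucial input is the deformation-theoretic bound of Proposition~\ref{keylem}, a bend-and-break argument showing $-K_Y\cdot\widetilde F\le 2$ for any fibration whose general fibre has rational normalization. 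This single inequality forces $e=1$ and $(L\cdot F)\le 2$ (resp.\ $\le 1$) when $p=2$ (resp.\ $p=3$). Your sketch never invokes any comparable geometric bound, and without one the numerical inequalities you wrote down are not strong enough to close the argument.
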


In fact, there are already some results along the line of Theorem \ref{vanishing} in the literature: \cite[Corollary 4.1]{Zheng17} and
\cite[Corollary 7.4]{Langer16} which seem stronger than Theorem \ref{vanishing} but may be wrong when $p=2$ . And from the email of Langer recently, this result follows by  modifying a small error in the calculation of length of the torsion part of $\Omega_F$ in \cite[Corollary 7.4]{Langer16} or by the first part of the proof of \cite[Corollary 4.1]{Zheng17} which still holds while the remaining part may go wrong. Here we give a new proof of this result and construct some examples to show that it is optimal.

Initially, our main skill is inspired by the method in Propositon 4.3 of \cite{DG15}. However we find that there is a small error in its proof (the general fibre $C_i$ may be non-reduced!), and it doesn't affect the results in Propositon 4.3 of \cite{DG15}. But it may lead a little trouble in Proposition 3.1 of \cite{C19}, which is a key step in the proof of Fujita's Conjecture on quasi-elliptic surfaces. However, the referee suggests to us a simple method to estimate the upper bound of anti-canonical degree on an integral curve by using adjunction formula, and it help us avoid the original tedious proof. The paper is organized as follows. The second section contains some preliminary materials which will be used in the last two sections to prove the main results. And we present a new proof of the vanishing theorem in the third section and study $d$-very ampleness of adjoint line bundle in the last section.

\textbf{Notations:}
\begin{itemize}
  \item Through this paper, $k$ is an algebraically closed field of characteristic $p>0$ and all varieties are defined over $k$;
  \item $K_X$ is the canonical divisor of a smooth projective variety $X$.
  \item For line bundles or divisors $A$ and $B$ on a projective surface, $(A\cdot B)$ denotes the intersection number of $A$ and $B$.
\end{itemize}

\textbf{Acknowledgement:} The author would like to thank Yi Gu and Chen Jiang for their useful discussions and suggestions and thank Adrian Langer for his warm letter. The author is supported by grant NSFC (No. 12101618). Finally the author would like to thank the referees for their carefully reading and useful contribution to Proposition \ref{keylem}.
\section{Preliminaries}
\subsection{The degree of anti-canonical divisor on curves}
The author thanks the refree for pointing out a stronger version of the following proposition with a simple proof. Indeed, as below the upper estimate of the anti-canonical degree on an integral part of the general fiber of a fibration over a curve could be easily obtained by the adjunction formula.
\begin{prop}\label{keylem}
Let $Y$ be a projective surface over an algebraically closed field. Let $F$ be an effective Cartier divisor on $Y$ such that $(F^2)=0$ and $h^0(\mathcal{O}(F))=1$,
and assume that $Y$ is
Gorenstein along $F$. Then $-(K_Y\cdot F)\leq 2$
is even and the equality holds if and only if $F$ is a smooth rational curve.
\end{prop}
\begin{proof}
By taking blow-ups away from $F$, we may assume that $Y$ is Gorenstein. Then the
adjunction formula says that $\omega_Y(F)|_F\simeq \omega_F$. By taking degree, we obtain
$(K_Y+F\cdot F)=\deg(\omega_Y(F)|_F)=\deg(\omega_F)=-2\chi(\mathcal{O}(F))=2h^1(\mathcal{O}(F))-2h^0(\mathcal{O}(F))$,
where we used the Riemann-Roch and the Serre duality on the Gorenstein curve $F$. Since $(F^2)=0$ and $h^0(\mathcal{O}(F))=1$,
$(K_Y\cdot F)=2h^1(\mathcal{O}(F))-2\geq-2$ and is even. Moreover the equality holds if and only if $p_a=h^1(\mathcal{O}(F))=0$.
\end{proof}
\begin{rmk}
The conditions in Proposition \ref{keylem} are satisfied when there is a fibration over a curve $f:Y\rightarrow C$ and $F$ is integeral and contained in a fibre of $f$.
\end{rmk}
\subsection{The Shepherd-Barron's result on the instability of locally free
sheaves}
\begin{defn}
A rank $2$ locally free sheaf $E$ on a smooth projective surface $X$ is \emph{unstable} if there is a short exact sequence
$$0\rightarrow\mathcal {O}(A)\rightarrow E\rightarrow I_Z\cdot\mathcal {O}(B)\rightarrow0$$
where $A, B\in {\rm Pic} (X)$, $I_Z$ is the ideal sheaf of an effective $0$-cycle $Z$ on $X$ and
$A - B \in C_{++}(X)$, the positive cone of $NS(X)$. (Recall that $C_{++}(X)=\{x\in NS(X)\mid x^2 >0$ and $x\cdot H>0$ for some ample divisor $H$ and hence every ample divisor $H$\}.) We say that $E$ is \emph{semi-stable} if it is not unstable.
\end{defn}

Let  $F:X\rightarrow X$ be the \emph{(absolute) Frobenius morphism} and $F^e$ be the  $e$-iteration of $F$. For any coherent sheaf $G$, and we write $G^{(p^e)}:=F^{e*}(G)$  simply. And for a morphism $f: Y\rightarrow X$, the \emph{relative $e$-iteration Frobenius morphism} $F_r^e$ is defined by the universal property the fibre product in following diagram
$$\xymatrix{
  Y \ar@/_/[ddr]_{f} \ar@/^/[drr]^{F^e}
    \ar@{.>}[dr]|-{F_r^e}                   \\
   & F^{e*}(Y) \ar[d] \ar[r]
                      & Y \ar[d]_{f}    \\
   & X \ar[r]^{F^e}     & X               .}
 $$
\begin{thm}[ Theorem 1, \cite{SB91}]\label{Bogomolov}
If $E$ is a rank $2$ locally free sheaf on a smooth projective surface X with $c_1^2(E)>4c_2(E)$, then there is a reduced and irreducible surface
$Y\subset \mathbb{P}=\mathbb{P}(E)$ such that
\begin{enumerate}
  \item the composite $\rho: Y \rightarrow X$ is purely inseparable, say of degree $p^n$;
  \item the $n$-iteration Frobenius $F^n: X \rightarrow X$ factors rationally through $Y$;
  \item putting $\widetilde{E} = F^{n*} E$, $\widetilde{\mathbb{P}}= \mathbb{P}(\widetilde{E})$ and letting $\psi: \widetilde{\mathbb{P}}\rightarrow \mathbb{P}$ be the natural map, we have $\psi^*Y = p^nX_1$, where $X_1$ is the quasi-section of $\widetilde{\mathbb{P}}$ corresponding to an exact sequence $0 \rightarrow \mathcal{O}(A) \rightarrow \widetilde{E} \rightarrow I_Z\cdot \mathcal{O}(B) \rightarrow 0$, where $A, B \in {\rm Pic}(X)$, $Z \in X$ is a $0$-cycle and $N=A-B$ lies in the positive cone $C_{++}(X)$ of $NS(X)$.
\end{enumerate}
\end{thm}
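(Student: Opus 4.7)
My approach is to reduce the theorem to a Frobenius-pullback instability statement for rank-$2$ bundles, and then extract the surface $Y$ geometrically from the resulting destabilizing sequence. The first step is to show that under the hypothesis $c_1^2(E)>4c_2(E)$, some iterated Frobenius pullback $\widetilde E = F^{n*} E$ is unstable in the sense of the definition above. The Chern numbers transform as $c_1^2(\widetilde E)=p^{2n}c_1^2(E)$ and $c_2(\widetilde E)=p^{2n}c_2(E)$, so the strict inequality is preserved. One direction of the Bogomolov-type inequality in characteristic $p$ (in Langer's form) asserts that a \emph{strongly semistable} rank-$2$ bundle must satisfy $c_1^2\le 4c_2$; contrapositively, our hypothesis forces some smallest $n\ge 0$ for which $F^{n*}E$ is unstable, i.e.\ fits into a destabilizing exact sequence $0\to \mathcal{O}(A)\to \widetilde E\to I_Z\cdot\mathcal{O}(B)\to 0$ with $N:=A-B\in C_{++}(X)$.

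Next, I would translate this algebraic datum into the geometry of $\widetilde{\mathbb{P}}=\mathbb{P}(\widetilde E)$. The quotient $\widetilde E\twoheadrightarrow I_Z\cdot\mathcal{O}(B)$ defines a rational section of the $\mathbb{P}^1$-bundle $\widetilde{\mathbb{P}}\to X$, which extends to a prime Weil divisor $X_1\subset \widetilde{\mathbb{P}}$ of relative degree one over $X$; it is the quasi-section demanded by part $(3)$. A Chow-ring computation recovers the class $N\in C_{++}(X)$ from the normal bundle of $X_1$. Pushing $X_1$ forward along the natural map $\psi:\widetilde{\mathbb{P}}\to \mathbb{P}=\mathbb{P}(E)$ induced by the $n$-fold absolute Frobenius on $X$, I would set $Y:=\psi(X_1)$, a reduced irreducible surface. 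Since $\psi$ is purely inseparable of degree $p^n$ and $X_1\to X$ is birational, the composition $\rho:Y\to X$ is purely inseparable of degree $p^n$; the projection formula then gives $\psi^*Y=p^n X_1$, and the factorization of $F^n:X\to X$ through $Y$ is formal from the universal property of the relative Frobenius together with $\deg\rho=p^n$.

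The main obstacle is the first step: showing that $c_1^2(E)>4c_2(E)$ already forces some Frobenius pullback to be unstable. In characteristic zero this is Bogomolov's inequality, proved by asymptotic Riemann-Roch on $\mathbb{P}(E)$ for the big line bundle $L=\mathcal{O}_{\mathbb{P}}(2)\otimes \pi^*(\det E)^{-1}$ (whose top self-intersection on the threefold $\mathbb{P}$ equals $c_1^2(E)-4c_2(E)$) together with Kodaira vanishing to extract a section. In characteristic $p$, Kodaira vanishing can fail, and one must iterate the Frobenius to amplify the relative positivity of $L$ enough for a Serre-type vanishing to take over, while tracking the Harder-Narasimhan filtrations of $F^{n*}E$ to identify the section's zero locus as $p^n$ times a single reduced quasi-section rather than a general effective divisor. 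Once this instability and the multiplicity structure are secured, Steps $(1)$, $(2)$, $(3)$ of the conclusion fall out of the dictionary between destabilizing sub-line bundles and quasi-sections.
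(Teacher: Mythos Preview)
The paper does not give its own proof of this statement; the entire proof reads ``See [Theorem 1, \cite{SB91}]''.  Immediately after, the paper does spell out the construction of $Y$: take the smallest $e$ for which $F^{e*}E$ is unstable, let $s:X\hookrightarrow\mathbb{P}(E^{(p^e)})$ be the quasi-section determined by the destabilizing sequence, and define $Y\subset\mathbb{P}(E)$ as the pullback of $s(X)$ along the relative Frobenius $F_r^e:\mathbb{P}(E)\to\mathbb{P}(E^{(p^e)})$, noting that this coincides with the image $\psi(X_1)$.  That is exactly the geometric construction in your second paragraph, so on the level of the construction your proposal agrees with the paper.

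Where you go beyond the paper is in sketching why a destabilizing sequence exists after some Frobenius pullback.  Your shortcut---invoke Langer's Bogomolov inequality for strongly semistable sheaves and take the contrapositive---is legitimate and logically independent of Shepherd-Barron's 1991 argument, which (as you note in your last paragraph) instead runs asymptotic Riemann--Roch on the threefold $\mathbb{P}(E)$ for the line bundle with top self-intersection $c_1^2-4c_2$ and then works hard to control the resulting section.  The Langer route is cleaner but anachronistic; either is acceptable here since the paper is only quoting the result.  The one point you pass over quickly is the identity $\psi^*Y=p^nX_1$ (equivalently $\deg\rho=p^n$): this genuinely requires the \emph{minimality} of $n$, since if $X_1$ descended to a quasi-section of $\mathbb{P}(F^{(n-1)*}E)$ then $\psi|_{X_1}$ would fail to be birational onto $Y$.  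You acknowledge this issue in your final paragraph, but it deserves an explicit sentence in the argument proper rather than being folded into ``the projection formula then gives''.
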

\begin{proof}
See Theorem 1 in \cite{SB91}.
\end{proof}
The $Y$ in the above theorem could be constructed as follows (which could also be considered as the image of $X_1$ under $\psi$ in Theorem 1 of \cite{SB91}):
Fix a semi-stable vector bundle $E$ of rank $2$ with $c_1^2(E)>4c_2(E)$. Suppose that $e>0$ is the smallest integer such that $F^{e*}E$ is unstable, then we have the following diagram, where $s$ is the section determined by the unstability of $F^{e*}E$, $Y=F^{e*}_r(X)$ is the pull back of this section under the relative $e$-iteration Frobenius $F_r^e$ which is reduced and irreducible, and  $\rho=\pi\!\!\mid_Y$ is a inseparable morphism  of degree $p^e$.
$$
\xymatrix{
  Y \ar@{_{(}->}[d] \ar[r]^{ \rho} & X \ar@{_{(}->}[d]^{s}\\
 \mathbb{P}(E) \ar[dr]_{\pi} \ar[r]^{ F_r^n} & \mathbb{P}(E^{(p^e)})\ar[d]^{F^{e*}(\pi)} \\
 & X  }
$$
\begin{prop}[\cite{SB91}, Corollary 5]\label{canonical divisor}
With the same assumption as Theorem \ref{Bogomolov},
$$K_Y\equiv \rho^*(K_X-\frac{p^e-1}{p^e}N).$$
\end{prop}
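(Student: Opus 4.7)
The plan is to apply adjunction to the embedding $Y\hookrightarrow\mathbb{P}(E)$, so the three inputs I need are the canonical class of $\mathbb{P}(E)$, the class of the divisor $Y$, and the restriction $\xi|_Y$, where $\xi=c_1(\mathcal{O}_{\mathbb{P}(E)}(1))$. With $\pi:\mathbb{P}(E)\to X$, the relative Euler sequence gives $K_{\mathbb{P}(E)}\equiv\pi^*(K_X+\det E)-2\xi$.

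The crucial step is identifying $Y$ as a Cartier divisor with class $[Y]=p^e\xi-\pi^*A$. I would obtain this by noting that the destabilizing inclusion $A\hookrightarrow\widetilde{E}$ composed with the $p^e$-th symmetric power of the tautological quotient $\pi^*E\twoheadrightarrow\mathcal{O}_{\mathbb{P}(E)}(1)$ produces a canonical map $\pi^*A\to\mathcal{O}_{\mathbb{P}(E)}(p^e)$, i.e.\ a section of $\mathcal{O}(p^e)\otimes\pi^*A^{-1}$ whose vanishing scheme is exactly the scheme-theoretic pullback $F_r^{e,-1}(X_1)$. In a local trivialization where $A$ is generated by a pair $(\tilde{a},\tilde{b})$, the defining equation is $\tilde{a}x^{p^e}+\tilde{b}y^{p^e}=0$. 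The key observation is that this is reduced and irreducible precisely by the minimality of $e$: a nontrivial factorization would force $\tilde{a}/\tilde{b}$ to be a $p$-th power in $K(X)$, which amounts to the destabilizing line subbundle descending one step through Frobenius, contradicting the choice of $e$.

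For $\xi|_Y$ I use the universal identity $F_r^{e,*}\mathcal{O}_{\widetilde{\mathbb{P}}}(1)=\mathcal{O}_{\mathbb{P}(E)}(p^e)$. On the quasi-section $X_1$, the line bundle $\mathcal{O}_{\widetilde{\mathbb{P}}}(1)|_{X_1}$ is numerically equivalent to $B$, the double dual of the quotient $\widetilde{E}/A=I_Z\cdot B$; any corrections from the 0-cycle $Z$ live in codimension two and so are invisible to $\equiv$. Restricting to $Y$ and using $\rho=\widetilde{\pi}|_{X_1}\circ F_r^e|_Y$ yields $p^e\,\xi|_Y\equiv\rho^*B$, hence $\xi|_Y\equiv\tfrac{1}{p^e}\rho^*B$.

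Assembling everything via adjunction $K_Y\equiv(K_{\mathbb{P}(E)}+Y)|_Y=((p^e-2)\xi+\pi^*(K_X+\det E-A))|_Y$ and substituting the value of $\xi|_Y$ gives $K_Y\equiv\rho^*(K_X+\det E-A+\tfrac{p^e-2}{p^e}B)$. A short algebraic manipulation using $\det\widetilde{E}=p^e\det E$ (so $A+B\equiv p^e\det E$) together with $N=A-B$ collapses this to the desired $K_Y\equiv\rho^*(K_X-\tfrac{p^e-1}{p^e}N)$. The principal obstacle is the second step: pinning down the scheme structure of $F_r^{e,-1}(X_1)$ as the reduced divisor of the asserted class, especially when $Z\neq\emptyset$ so that $X_1$ is only a quasi-section; once the class $[Y]=p^e\xi-\pi^*A$ is in hand, the remaining manipulations are purely formal.
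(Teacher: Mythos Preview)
The paper does not supply its own proof of this proposition; it is stated with a bare citation to \cite{SB91}, Corollary~5, and the text moves on immediately to the next subsection. Your adjunction computation on $\mathbb{P}(E)$ is correct and is the standard route to this formula (and is essentially what Shepherd-Barron does).

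One small terminological slip: the map $\pi^*A\to\mathcal{O}_{\mathbb{P}(E)}(p^e)$ you use to cut out $Y$ does not come from the \emph{symmetric power} of the tautological quotient. It comes from pulling back the tautological surjection $\widetilde{\pi}^*\widetilde{E}\twoheadrightarrow\mathcal{O}_{\widetilde{\mathbb{P}}}(1)$ along the relative Frobenius $F_r^e:\mathbb{P}(E)\to\widetilde{\mathbb{P}}$ (equivalently, applying the absolute Frobenius of $\mathbb{P}(E)$ to $\pi^*E\twoheadrightarrow\mathcal{O}(1)$), which yields $\pi^*\widetilde{E}\to\mathcal{O}_{\mathbb{P}(E)}(p^e)$ and then precomposing with $\pi^*A\hookrightarrow\pi^*\widetilde{E}$. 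With that correction, your identification $[Y]=p^e\xi-\pi^*A$, the restriction $p^e\,\xi|_Y\equiv\rho^*B$, and the final algebra using $A+B\equiv p^e\det E$ and $N=A-B$ all go through exactly as you wrote.
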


\subsection{The Tyurin's result on a construction of locally free sheaves}
 Let $X$ be a nonsingular projective surface defined over an algebraically closed field. Let $L$ be a line bundle on $X$. For a $0$-cycle $Z\in X^{[d]}$, consider the short exact sequence $0\rightarrow L\otimes I_Z\rightarrow L\rightarrow L\mid_Z\rightarrow0$,
then we have a long exact sequence
$$0\rightarrow H^0(X,L\otimes I_Z) \rightarrow H^0(X,L) \rightarrow H^0(L\mid_Z) \rightarrow H^1(X,L\otimes I_Z)  \rightarrow  H^1(X,L) \rightarrow  0.$$
Now put
$\delta(Z, L):=h^1(X,L\otimes I_Z)-h^1(X,L)$.
Note that the integer $\delta(Z, L)$ is nonnegative.
The cycle $Z$ is called\emph{ $L$-stable }(in the sense of Tyurin) if $\delta(Z, L) >\delta(Z_0, L)$
for any subcycle $Z_0$ of $Z$.
Note that $L$ is $d$-very ample if and only if $\delta(Z, L)=0$ for all $Z \in X^{[d+1]}$.
\begin{thm} [\cite{T87}, Lemma 1.2]\label{T87}
Let $L$ be a line bundle on a
nonsingular projective surface $X$ defined over an algebraically closed field
and let $Z$ be an $L$-stable $0$-cycle of $X$. Then there exists an extension
$$0 \rightarrow H^1(X,L\otimes I_Z) \otimes \mathcal{O}(K_X)\rightarrow E(Z, L)\rightarrow  L\otimes I_Z \rightarrow 0,$$
where $E(Z, L)$ is a locally free sheaf on $X$ of rank $h^1(X,L\otimes I_Z)+ 1$.
\end{thm}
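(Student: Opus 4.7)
The plan is to realize $E(Z,L)$ as the extension classified by the identity endomorphism of $V := H^1(X, L\otimes I_Z)$ via the Serre-duality pairing; this is the classical Serre-Tyurin construction. Since $X$ is a smooth projective surface, Serre duality gives a canonical isomorphism $\mathrm{Ext}^1(L\otimes I_Z, K_X)\cong V^{\vee}$. Tensoring with $V$ then yields
$$
\mathrm{Ext}^1(L\otimes I_Z,\,V\otimes K_X)\;\cong\; V\otimes V^{\vee}\;\cong\;\mathrm{End}(V),
$$
and I would take $E(Z,L)$ to be the middle term of the extension
$$
0 \to V\otimes K_X \to E(Z,L) \to L\otimes I_Z \to 0
$$
whose class corresponds to $\mathrm{id}_V$. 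By construction the rank of $E(Z,L)$ is $\dim V+1 = H^1(X, L\otimes I_Z)+1$, as required.

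It remains to show that $E(Z,L)$ is locally free, and this is the step where the $L$-stability of $Z$ enters. I would feed the class of the extension into the local-to-global spectral sequence, which supplies an exact sequence
$$
H^1(X,\, V\otimes K_X\otimes L^{-1})\to\mathrm{Ext}^1(L\otimes I_Z,\, V\otimes K_X)\to H^0(X,\sE xt^1(L\otimes I_Z,\, V\otimes K_X))\to H^2(X,\, V\otimes K_X\otimes L^{-1}).
$$
A local computation using a Koszul resolution of the stalk $I_{Z,p}$ at each $p\in\mathrm{Supp}(Z)$ shows that $E(Z,L)$ is locally free at $p$ exactly when the image of our class in the stalk $\sE xt^1(L\otimes I_Z,\, V\otimes K_X)_{p}$ generates it as an $\sO_{X,p}$-module; equivalently, the induced $k$-linear map $V\to \sE xt^1(L\otimes I_Z, K_X)_{p}$ must be surjective. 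Off $\mathrm{Supp}(Z)$ the sheaf $L\otimes I_Z$ is already a line bundle, so the extension is automatically a locally free extension there, and only the points of $Z$ need to be checked.

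The main obstacle is verifying this surjectivity simultaneously at every point of $Z$, and this is precisely the Cayley-Bacharach content of $L$-stability. If surjectivity failed at some $p$, a nonzero covector $\lambda\in V^{\vee}$ annihilating the local image would, via the Serre-duality identification $V^{\vee}\cong \mathrm{Ext}^1(L\otimes I_Z, K_X)$, produce a cohomology class in $V$ already detected by a strictly smaller subcycle $Z_0\subsetneq Z$ obtained by removing the appropriate portion of $Z$ at $p$; this would give $\delta(Z_0,L)=\delta(Z,L)$, contradicting $L$-stability. Hence $L$-stability forces local freeness of $E(Z,L)$ at every point of $Z$, completing the construction.
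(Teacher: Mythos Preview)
The paper does not prove this statement; it is quoted from Tyurin \cite{T87} as Lemma~1.2 and used as a black box (specifically, in the proof of Lemma~\ref{rank2vector}). There is therefore no argument in the paper to compare yours against.

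For what it is worth, your outline is the standard Serre--Tyurin construction and is correct in substance: take the universal extension corresponding to $\mathrm{id}_V$ under $\mathrm{Ext}^1(L\otimes I_Z,\,V\otimes K_X)\cong\mathrm{End}(V)$, then deduce local freeness from $L$-stability via the local-to-global spectral sequence and a Cayley--Bacharach step. The one place I would tighten is the bookkeeping in the last paragraph. The map whose surjectivity governs local freeness at $p$ is the connecting homomorphism obtained by applying $\sH om(-,\sO_X)$ to the extension, namely $V^\vee\otimes K_X^{-1}\to\sE xt^1(L\otimes I_Z,\sO_X)$; after trivializing at $p$ this is a map out of $V^\vee$, not $V$. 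Accordingly, failure of surjectivity is witnessed by a nonzero functional on the stalk $\sE xt^1(I_Z,\sO_X)_p$, which by local duality corresponds to a colength-one subcycle $Z_0\subsetneq Z$ supported at $p$; one then checks that the restriction $H^1(L\otimes I_Z)\to H^1(L\otimes I_{Z_0})$ is an isomorphism, so $\delta(Z_0,L)=\delta(Z,L)$, contradicting $L$-stability. With that dualization straightened out, your argument goes through.
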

\section{Vanishing theorem on quasi-elliptic surfaces}
In this section we give a new proof of the following vanishing theorem from a letter of Langer, which can also be obtained by a modification of a small error in the proof of \cite[Corollary 7.4]{Langer16} or by the first part of the proof of \cite[Corollary 4.1]{Zheng17} (the other part may go wrong), and then show it is optimal by some examples.
\begin{thm}[Langer]\label{vanishing theorem}
Let $X$ be a quasi-elliptic surface, and $F$ be a general fibre of the quasi-elliptic fibration $f:X\rightarrow C$. Let $L$ be a nef and big divisor on $X$.
\begin{itemize}
  \item If $ p=2$ and $(L\cdot F)>2$, then $H^1(X,L^{-1})=0$ and
  \item If $ p=3$ and $(L\cdot F)>1$, then $H^1(X,L^{-1})=0$.
\end{itemize}
\end{thm}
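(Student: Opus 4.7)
The plan is to assume $H^1(X,L^{-1})\neq 0$ and derive a numerical contradiction with the hypothesis on $L\cdot F$. By the Serre/Mukai construction, any nonzero class in $\mathrm{Ext}^1(L,\mathcal{O}_X)\cong H^1(X,L^{-1})$ yields a nonsplit extension
\[0 \to \mathcal{O}_X \to E \to L \to 0\]
of rank $2$ locally free sheaves with $c_1(E)=L$ and $c_2(E)=0$. The nef-and-big hypothesis forces $c_1(E)^2-4c_2(E)=L^2>0$, so Theorem \ref{Bogomolov} produces a purely inseparable cover $\rho\colon Y\to X$ of degree $p^e$ for some $e\geq 1$, together with an instability sequence
\[0\to A\to F^{e*}E\to I_Z\otimes B\to 0,\qquad N:=A-B\in C_{++}(X).\]

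Next I would pin down $A$ using the composition $A\hookrightarrow F^{e*}E\twoheadrightarrow F^{e*}L=p^eL$. If this vanished, then $A\hookrightarrow\mathcal{O}_X$ would force $-A$ effective, and then $N\cdot H=(2A-p^eL)\cdot H<0$ for any ample $H$ (since $L$ nef and big implies $L\cdot H>0$), contradicting $N\in C_{++}$. Hence the map is an injection of line bundles, and $A=p^eL-D$, $B=D$, $N=p^eL-2D$ for some effective divisor $D\geq 0$.

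The main geometric input comes from Proposition \ref{keylem} applied to the composed fibration $g:=f\circ\rho\colon Y\to C$. The surface $Y$ is a Cartier divisor in the smooth projective bundle $\mathbb{P}(F^{e*}E)$, hence a local complete intersection; its general fibre $F^Y=\rho^*F$ is irreducible (as $F$ is and $\rho$ is a homeomorphism), and, taking $F$ sufficiently general, $Y$ is smooth along $F^Y$. Since the general fibre $F$ of a quasi-elliptic fibration is a cuspidal rational curve, the normalization of $F^Y_{\mathrm{red}}$ has geometric genus $0$, so Proposition \ref{keylem} gives $-K_Y\cdot F^Y_{\mathrm{red}}\leq 2$. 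Writing $\rho^*F=m\cdot F^Y_{\mathrm{red}}$ with $m\leq p^e$, and combining with Proposition \ref{canonical divisor} and the identity $K_X\cdot F=0$ (from $p_a(F)=1$), one arrives at
\[(p^e-1)\,N\cdot F \;=\; -K_Y\cdot\rho^*F \;\leq\; 2m \;\leq\; 2p^e,\qquad\text{hence}\qquad N\cdot F\leq \frac{2p^e}{p^e-1}.\]

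The decisive final step is to show that $D\cdot F=0$; combined with the estimate above this yields $L\cdot F\leq \frac{2}{p^e-1}\leq \frac{2}{p-1}$, which is $\leq 2$ for $p=2$ and $\leq 1$ for $p=3$, contradicting the hypothesis in each case. Since $N$ is big and $F$ is nef, $N\cdot F>0$, so $A|_F$ is a destabilizing subsheaf of $F^{e*}E|_F=F^{e*}(E|_F)$ on the cuspidal fibre $F$. The hard part is to exploit the specific structure of rank-$2$ bundles on this arithmetic-genus-one cuspidal rational curve, whose generalized Jacobian is $\mathbb{G}_a$ (on which the absolute Frobenius is the $p$-th power map with infinitesimal kernel $\alpha_p$), to show that any such destabilizing sub must coincide with the quotient $p^eL|_F$; this would force $D\cdot F=0$. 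The examples at the end of the section will then confirm that the bounds $L\cdot F>2$ and $L\cdot F>1$ are optimal.
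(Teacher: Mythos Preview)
Your setup through the Shepherd-Barron cover $\rho\colon Y\to X$ and the application of Proposition~\ref{keylem} is correct and parallels the paper, but there is a genuine gap at the end: you never prove that $D\cdot F=0$. You explicitly flag this yourself (``The hard part is\ldots''), and the sketch involving destabilizing subsheaves of $F^{e*}(E|_F)$ on the cuspidal fibre and the $\mathbb{G}_a$-structure of its generalized Jacobian is not carried out; it is also not clear that the maximal destabilizing sub of $F^{e*}E$ restricts to the maximal one on $F$, nor that $N\cdot F$ is strictly positive (big intersected with a nef class of square zero need not be positive). Without $D\cdot F=0$, the inequality $N\cdot F\le \tfrac{2p^e}{p^e-1}$ with $N=p^eL-2D$ does not bound $L\cdot F$.

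The paper avoids this difficulty by replacing Theorem~\ref{Bogomolov} with a sharper input: Corollary~17 of \cite{SB91}, which says that for a nonsplit extension $0\to\mathcal O_X\to E\to L\to 0$ the pullback $F^{e*}E$ eventually \emph{splits}. Taking the least such $e$, one has $F^{e*}E\cong\mathcal O_X\oplus L^{p^e}$, so the section $s$ gives $A=L^{p^e}$, $B=\mathcal O_X$, and hence $N=p^eL$ on the nose---there is no effective $D$ to control. Writing $\rho^*F=p^{\,e-e_0}\widetilde F$ with $\widetilde F$ integral and using Proposition~\ref{canonical divisor} together with $K_X\cdot F=0$, one gets
\[
-K_Y\cdot\widetilde F \;=\; p^{e_0}(p^e-1)\,L\cdot F \;\le\; 2,
\]
which forces $e=1$ and $L\cdot F\le 2$ for $p=2$, $L\cdot F\le 1$ for $p=3$. (A minor aside: $Y$ sits in $\mathbb P(E)$, not $\mathbb P(F^{e*}E)$, though this does not affect the local-complete-intersection claim.) Your route via mere instability might be salvageable, but the splitting result makes the $D\cdot F=0$ step unnecessary.
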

\begin{proof}
Assume that $H^1(X,L^{-1})\neq0$.
Let us take any nonzero element $0\neq\alpha\in H^1(X,L^{-1})$ which gives a non-split extension
$$0\rightarrow \mathcal {O}_X\rightarrow E\rightarrow L\rightarrow0.$$
 Corollary 17 in \cite{SB91} implies that $F^{e*}E$ will split when $e\gg 0$. Suppose that $e>0$ is the smallest integer such that it splits, then we have the following diagram, where $s$ is the section determined by the splitting of $F^{e*}E$, $Y=F^{e*}_r(X)$ is the pull back of this section which is reduced and irreducible and  $\rho=\pi\!\!\mid_Y$ is an inseparable morphism  of degree $p^e$.

$$
\xymatrix{
  Y \ar@{_{(}->}[d] \ar[r]^{ \rho} & X \ar@{_{(}->}[d]^{s}\\
 \mathbb{P}(E) \ar[dr]_{\pi} \ar[r]^{ F_r^n} & \mathbb{P}(E^{(p^e)})\ar[d]^{F^{e*}(\pi)} \\
 & X  }
$$

But the general fibre of $g= f \circ \rho: Y\rightarrow C$ may be not reduced: $$\rho^*(F)=p^{e-e_0}\widetilde{F}$$ with $ 0\leq e_0\leq e$ and $\widetilde{F}$ integral.

So by Proposition \ref{canonical divisor} we have
\begin{eqnarray*}-(K_Y\cdot \widetilde{F})&=&(\rho^*((p^e-1)L-K_X)\cdot \widetilde{F})\\
&=&p^{e_0-e}(\rho^*((p^e-1)L-K_X)\cdot \rho^*F)\\
&=&p^{e_0}((p^e-1)L-K_X\cdot F)\\
&=&p^{e_0}(p^e-1)(L\cdot F),
\end{eqnarray*}
where the last equality is due to that $f$ is a quasi-elliptic fibration.
By Proposition \ref{keylem},
we have the inequality $0<p^{e_0}(p^e-1)(L\cdot F)=-(K_Y\cdot \widetilde{F})\leq 2$ and $-(K_Y\cdot \widetilde{F})$ is even, so $p^{e_0}(p^e-1)(L\cdot F)=-(K_Y\cdot \widetilde{F})=2$, which gives that\\
when $p=2$,
\begin{itemize}
  \item $e=1$, $e_0=0$ and $(L\cdot F)=2$ or
  \item $e=1$, $e_0=1$ and $(L\cdot F)=1$
\end{itemize}
and when $p=3$,
\begin{itemize}
  \item $e=1$, $e_0=0$ and $(L\cdot F)=1$
\end{itemize}
So we get our result.
\end{proof}
\begin{cor}\label{thm}
Let $X$ be a quasi-elliptic surface and  $L$ a nef and big divisor on $X$, then we have
\begin{itemize}
  \item if $ p=2$ and $n\geq3$, then $H^1(X,L^{-n})=0$ and
  \item if $ p=3$ and $n\geq2$, then $H^1(X,L^{-n})=0$.
\end{itemize}
\end{cor}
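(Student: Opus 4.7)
The plan is to reduce the corollary to an immediate application of Theorem \ref{vanishing theorem}, applied with $L^n$ in place of $L$. Since $L$ is big and nef, so is $L^n$ for every $n \geq 1$, so the only hypothesis of Theorem \ref{vanishing theorem} that needs checking is the lower bound on the intersection number with a general fibre $F$ of the quasi-elliptic fibration.

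The main observation I would establish first is that $L \cdot F \geq 1$. By Kodaira's lemma, since $L$ is a big $\mathbb{Q}$-divisor, one can write $L \equiv_{\mathbb{Q}} A + E$ with $A$ an ample $\mathbb{Q}$-divisor and $E$ an effective $\mathbb{Q}$-divisor. Because $F$ is a general fibre of $f$, it is not a component of the fixed effective divisor $E$, so $E \cdot F \geq 0$; and $A \cdot F > 0$ because $A$ is ample and $F$ is an effective curve. Hence $L \cdot F > 0$, and since $L \cdot F \in \mathbb{Z}$ we get $L \cdot F \geq 1$.

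Given this, $(L^n \cdot F) = n (L \cdot F) \geq n$. When $p = 2$ and $n \geq 3$ this yields $(L^n \cdot F) \geq 3 > 2$, so Theorem \ref{vanishing theorem} applied to $L^n$ gives $H^1(X, L^{-n}) = 0$. When $p = 3$ and $n \geq 2$ this yields $(L^n \cdot F) \geq 2 > 1$, and again Theorem \ref{vanishing theorem} applies, concluding the proof.

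There is essentially no obstacle in this deduction; the content is entirely in Theorem \ref{vanishing theorem}. The only minor point to watch is the verification of $L \cdot F \geq 1$, which could alternatively be justified via the Hodge index theorem (since $L^2 > 0$, $F^2 = 0$, and $F$ is not numerically trivial, one cannot have $L \cdot F = 0$), but the Kodaira-lemma route is the cleanest.
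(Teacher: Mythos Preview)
Your proposal is correct, and it is exactly the deduction the paper has in mind: the corollary is stated immediately after Theorem~\ref{vanishing theorem} with no proof, so the intended argument is precisely to apply the theorem to $L^n$ once one knows $(L\cdot F)\geq 1$. Your justification of $(L\cdot F)\geq 1$ via Kodaira's lemma (or, as you note, the Hodge index theorem together with $L^2>0$, $F^2=0$, $F\not\equiv 0$) is the only point requiring a word, and either route is fine.
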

Next, we will construct a quasi-elliptic surface to show that the above vanishing theorem is optimal.
\begin{exmp}\label{example}
Let $k$ be an algebraically closed field with ${\rm char}(k)=2$ and $C\subseteq \mathbb{P}_k^2=\mathrm{Proj}(k[X,Y,Z])$ be the plane curve defined by the equation:
$$Y^{2e}-X^{2e-1}Y=XZ^{2e-1},$$
where $e>1$ is a free variable. It is easy to check  that $C$ is a smooth curve and  $2g(C)-2=2e(2e-3)$.
Take $\infty:=[0,0,1]$ on $C$. Then $U:=C\backslash \infty=C\cap \{ X\neq 0\}$ is an affine open subset defined by $y^{2e}-y=z^{2e-1}$ with $y=Y/X$ and $z=Z/X$. As a result, $\mathrm{d}z$ is a generator of $\Omega^1_{C}|_{U}$ since $\mathrm{d}y=z^{2e-2}\mathrm{d}z$. So we have
$$
K_C=\mathrm{div}(\mathrm{d}z)=(2g(C)-2)\infty=2e(2e-3)\cdot \infty.
$$
Let $D=e(2e-3)\cdot \infty$, then  $C$ is a Tango curve with a Tango structure $L=\mathcal {O}_C(D)$ by \cite[Definition 2.1]{Zheng17} (see \cite[]{Tango72,Mu13,GZZ,Zh22} for more details about this example of Tango curves).

Set $e=3e_1$ and $N=e_1(2e-3)\cdot \infty$, then $L=\mathcal {O}_C (3N)$. Next we follow the same argument as section 2 in \cite{Zheng17} to construct a Raynaud surface $X$ which is an $l$-cycle cover of a ruled surface $P$ over $C$ with $l=p+1=3$
$$\phi: X\stackrel{\psi}{\longrightarrow} P\stackrel{\pi}{\longrightarrow} C.$$
Note that it is a quasi-elliptic fibration by \cite[Proposition 2.3]{Zheng17} and denote general fibre by $F$.

In this case, let $a=2$ and $b=1$, then $Z_{a,b}=\mathcal {O}_X(a\widetilde{E})\otimes \phi^*N^b=\mathcal {O}_X(2\widetilde{E})\otimes \phi^*N$ is ample and satisfies the condition in \cite[Theorem 3.7]{Zheng17}.
Hence we get $$H^1(X,Z_{2,1}^{-1})\neq0.$$

But $$(Z_{2,1}\cdot F)=2>1,$$ which leads to a contradiction with \cite[Corollary 7.4]{Langer16}.

Moveover, if we set $e_1=2e_2$ for some positive integer $e_2$, then $Z_{2,1}=2A$ with $A=\mathcal {O}_X(\widetilde{E})\otimes \mathcal {O}(e_2(2e-3)\cdot \infty)$ ample and $H^1(X,A^{-2})\neq0$, which leads to a contradiction with \cite[Corollary 4.1]{Zheng17}.
\end{exmp}
By a similar argument we could also obtain a quasi-elliptic surface $X$ in characteristic $3$ such that $H^1(X,A^{-1})\neq0$ with $A$ ample on $X$.
\section{Adjoint linear system on quasi-elliptic surfaces}
In this section we prove a theorem of $d$-very ampleness of Reider-type in positive characteristic on quasi-elliptic surfaces.
\begin{thm}\label{d-veryample}
Let $X$ be a quasi-elliptic surface, and $F$ be a general fibre of the quasi-elliptic fibration $f:X\rightarrow C$. Let $L$ be a nef and big divisor on $X$.
Assume that $$(L^2)>4(d+1)$$
for a nonnegative integer $d$, then we have the following descriptions.
\begin{enumerate}
  \item When $p=2$, we assume that $(L\cdot F)>2$ in addition. Then either $\mathcal{O}_X(K_X+L)$ is $d$-very ample or there exists an effective divisor $B$ containing a $0$-cycle $Z^{(2)}=F^{*}Z$, which is the Frobenius pull back of a $0$-cycle $Z$ of $\deg Z\leq d+1$ where the $d$-very ampleness fails, such that $2L-B$ is $\mathbb{Q}$-effective and
      $$2(L\cdot B)-4\deg Z \leq (B^2)\leq (L\cdot B) \leq 4\deg Z;$$
  \item When $p=3$, we assume that $(L\cdot F)>1$ in addition. Then either $\mathcal{O}_X(K_X+L)$ is $d$-very ample or there exists an effective divisor $B$ containing a $0$-cycle $Z^{(3)}=F^{*}Z$, which is the Frobenius pull back of a $0$-cycle $Z$ of $\deg Z\leq d+1$ where the $d$-very ampleness fails, such that $3L-2B$ is $\mathbb{Q}$-effective and
      $$3(L\cdot B)-9\deg Z \leq (B^2)\leq \frac{3(L\cdot B)}{2} \leq 9\deg Z.$$
\end{enumerate}
\end{thm}
\begin{proof}
Assume that $\mathcal{O}_X(K_X+L)$ is $(d-1)$-very ample but not $d$-very ample. Then there exist a $0$-cycle $Z$ of degree $d+1$ where the $d$-very ampleness fails.
By Theorem \ref{vanishing theorem}, we have $H^1(X,\mathcal{O}_X(K_X+L))=0$,
 then by Lemma \ref{rank2vector} we obtain a locally free sheaf $E$ of rank $2$ on $X$ sitting in the short exact sequence
$$0\rightarrow \mathcal {O}_X \rightarrow E \rightarrow I_Z\cdot L \rightarrow 0.$$
Moreover we have
$$c_1^2(E)-4c_2(E)=(L^2)-4 \deg Z= (L^2)-4(d+1)>0.$$
When $p=3$, by Lemma \ref{unstable} $F^*(E)$ is unstable, then we have the following diagram with exact vertical and horizontal sequences:
$$\xymatrix{
    &  & 0 \ar[d]_{} &  &  \\
    &  & \mathcal{O}(A) \ar[d]_{} \ar[rd]^{\sigma}&  &  \\
  0  \ar[r]^{} & \mathcal {O}_X\ar[r]^{} & E^{(3)} \ar[d]_{} \ar[r]^{} & I_Z^{(3)}\cdot L^{\otimes 3}  \ar[r]^{} & 0  \\
   &  & I_W\cdot \mathcal{O}(B) \ar[d]_{}  &  &  \\
  &  & 0 &  &   }
  $$
where $A, B \in {\rm Pic}(X)$ and $I_W$ is the ideal sheaf of a $0$-cycle $W$ on $X$ and
$A-B$ satisfies
\begin{itemize}
  \item $((A-B)^2)\geq 9(c_1^2(E)-4c_2(E))>0$ and
  \item $(A-B\cdot H)>0$ for any ample divisor $H$ on $X$.
\end{itemize}
Note that the composition map $\sigma:A\rightarrow E^{(3)}\rightarrow I_Z^{(3)}\cdot L^{\otimes 3}$ is nonzero, otherwise we have $A\hookrightarrow \mathcal {O}_X$ and $(A-B\cdot H)=(2A-3L\cdot H)<0$ for any ample divisor $H$ on $X$ which is a contradiction. Let $B=3L-A$ be the effective divisor defined by $\sigma$. Then it contains the $0$-cycle $Z^{(3)}=F^{*}Z$ since its local function is contained in the ideal sheaf $I_Z^{(3)}$ by the map $\sigma$. Thus we have
\begin{enumerate}
  \item $2(L\cdot B)\leq 3(L^2)$,
  \item $3(L\cdot B)-(B^2)\leq 9\deg Z$,
  \item $(L\cdot B)\geq0$ and
  \item $(L^2)(B^2)\leq (L\cdot B)^2$,
\end{enumerate}
where the first inequality is from the unstablity of $E^{(3)}$, the second one is obtained by computing the second Chern class of $E^{(3)}$ with the vertical and horizontal sequences and last one is form the Hodge index theorem. Put them together:
$$3(L\cdot B)-9\deg Z \leq (B^2) \leq \frac{(L\cdot B)^2}{L^2}\leq \frac{3(L\cdot B)}{2}.$$
So we have
$$3(L\cdot B)-9\deg Z \leq (B^2)\leq \frac{3(L\cdot B)}{2} \leq 9\deg Z.$$

When $p=2$, by Lemma \ref{unstable} we have $F^{*}(E)$ is unstable. Then by a similar argument as above, we will get an effective divisor $B$ containing the $0$-cycle $Z^{(2)}=F^{*}Z$ such that
$$2(L\cdot B)-4\deg Z \leq (B^2)\leq (L\cdot B) \leq 4\deg Z.$$
\end{proof}

\begin{lem}[\cite{BS91}, Lemma 1.2]\label{filtration}
Let $R$ be a Noetherian local ring and, $I$ and
$J$ be ideals of $R$ with $I \subseteq J$. Assume that ${\rm length}(R/I)<\infty$. Then there exists a chain
$$I=I_0 \subset I_1 \subset \cdots \subset I_r = J$$
of ideals of $R$ with ${\rm length}(I_i/I_{i-1})=1$ for $i=1,\ldots,r$.
\end{lem}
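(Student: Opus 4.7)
The plan is to reduce the statement to the classical existence of a composition series for a module of finite length. First I would note that $J/I$, viewed as an $R$-module, is a submodule of $R/I$; since $\mathrm{length}(R/I) < \infty$ by hypothesis, this forces $\mathrm{length}_R(J/I) < \infty$, and I set $r := \mathrm{length}_R(J/I)$. This integer will be the length of the desired chain.

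Next I would build a composition series
$$0 = M_0 \subset M_1 \subset \cdots \subset M_r = J/I$$
with each successive quotient $M_i/M_{i-1}$ simple (hence of length one as an $R$-module). The standard argument is induction on $\mathrm{length}_R(J/I)$: if $J/I \neq 0$, then since $R$ is local with maximal ideal $\mathfrak{m}$, the socle $\mathrm{Hom}_R(R/\mathfrak{m}, J/I)$ is a nonzero $R/\mathfrak{m}$-vector space, from which I extract a simple submodule $M_1 \subseteq J/I$. The inductive hypothesis applied to $(J/I)/M_1$ then produces a composition series whose preimage under the quotient map $J/I \twoheadrightarrow (J/I)/M_1$ extends the single step $0 \subset M_1$ to the desired composition series of $J/I$.

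Finally I would transfer this chain back to $R$ via the lattice correspondence between $R$-submodules of $J/I$ and ideals of $R$ lying between $I$ and $J$: defining $I_i$ to be the ideal corresponding to $M_i$ produces $I = I_0 \subset I_1 \subset \cdots \subset I_r = J$, and the canonical isomorphism $I_i/I_{i-1} \cong M_i/M_{i-1}$ shows each successive quotient has length one, as required.

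There is no serious obstacle here: the lemma is a direct application of the existence of a composition series for a finite-length module, combined with the standard correspondence theorem. The local hypothesis on $R$ is essentially cosmetic in the argument; it serves only to identify every simple composition factor with a copy of $R/\mathfrak{m}$, and is otherwise unused in producing the chain of ideals of length $r$.
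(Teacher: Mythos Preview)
Your argument is correct: $J/I$ is a submodule of the finite-length module $R/I$, hence has a composition series, and the correspondence between submodules of $J/I$ and ideals between $I$ and $J$ transports this series back to the required chain of ideals. There is nothing to fix.

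As for comparison with the paper: the paper does not supply its own proof of this lemma at all. It simply quotes the statement as \cite{BS91}, Lemma~1.2, and uses it as a black box in the proof of Lemma~\ref{rank2vector}. So your write-up actually provides more than the paper does here. Your remark that the local hypothesis is essentially cosmetic is also accurate for the purpose of producing the chain; it matters only insofar as it forces every simple factor to be $R/\mathfrak{m}$, which is irrelevant to the length-one condition being claimed.
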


The following lemma is a slight improvement of Lemma 2.2 in \cite{NT93} by H. Terakawa. For reader's convenience, we present a proof here.
\begin{lem}[\cite{TH99}, Lemma 2.2]\label{rank2vector}
Let $X$ be a nonsingular projective surface defined over an algebraically closed field k and $L$ a line bundle on $X$ such that $H^1(X,\mathcal{O}_X(K_X+L))=0$.
Let $Z$ be a $0$-cycle with $\deg Z = d + 1$ where $d$ is a nonnegative integer. Assume that $\mathcal{O}_X(K_X+L)$ is $d-1$-very ample and the restriction map
$$\Gamma(\mathcal{O}_X(K_X+L))\rightarrow \Gamma(\mathcal {O}_X(K_X+L)|_Z)$$
is not surjective. Then there exists a rank $2$ locally free sheaf $E$ on $X$ which
is given by the short exact sequence
$$0\rightarrow \mathcal {O}_X \rightarrow E \rightarrow I_Z\cdot L \rightarrow 0,$$
where $I_Z$ is the ideal sheaf of $Z$.
\end{lem}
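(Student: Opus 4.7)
The plan is to construct $E$ via the Serre (extension) construction applied to the pair $(\mathcal{O}_X, I_Z\otimes L)$, and to verify local freeness via a Cayley--Bacharach argument whose key input is the $(d-1)$-very ampleness of $K_X+L$.

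First, I would derive the non-vanishing $H^1(X, I_Z\otimes(K_X+L))\neq 0$. From the short exact sequence
\[ 0 \to I_Z\otimes(K_X+L) \to K_X+L \to \mathcal{O}_Z\otimes(K_X+L) \to 0 \]
and the hypothesis $H^1(X,K_X+L)=0$, the cokernel of the restriction $\Gamma(K_X+L)\to\Gamma(\mathcal{O}_Z(K_X+L))$ is identified with $H^1(X,I_Z\otimes(K_X+L))$, which is nonzero by hypothesis. Serre duality on the smooth projective surface $X$ then yields $\mathrm{Ext}^1(I_Z\otimes L,\mathcal{O}_X)\cong H^1(X,I_Z\otimes(K_X+L))^*\neq 0$, so I can pick a nonzero class $\xi$ producing an extension
\[ 0 \to \mathcal{O}_X \to E \to I_Z\otimes L \to 0, \]
with $E$ coherent of rank~$2$ on $X$.

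Second, I would invoke $(d-1)$-very ampleness to establish the Cayley--Bacharach property. For every proper subscheme $Z'\subsetneq Z$ one has $\deg Z'\leq d$, so $\Gamma(K_X+L)\to\Gamma(\mathcal{O}_{Z'}(K_X+L))$ is surjective and, combined with $H^1(X,K_X+L)=0$, gives $H^1(X,I_{Z'}\otimes(K_X+L))=0$. Applying Lemma~\ref{filtration} to $(I_Z)_p\subset\mathcal{O}_{X,p}$ produces, for each $p\in\mathrm{Supp}(Z)$, a subscheme $Z'\subsetneq Z$ with $\mathrm{length}(I_{Z'}/I_Z)=1$ concentrated at $p$; the long exact sequence coming from $0\to I_Z\to I_{Z'}\to I_{Z'}/I_Z\to 0$ tensored with $K_X+L$ then shows $\dim H^1(X,I_Z\otimes(K_X+L))=1$, and the Cayley--Bacharach jump condition holds at every $p$.

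Third, I would deduce local freeness of $E$. Via Serre duality, the vanishing $H^1(X,I_{Z'}\otimes(K_X+L))=0$ translates into the vanishing of the natural map $\mathrm{Ext}^1(I_{Z'}\otimes L,\mathcal{O}_X)\to\mathrm{Ext}^1(I_Z\otimes L,\mathcal{O}_X)$ induced by $I_Z\hookrightarrow I_{Z'}$; hence the nonzero class $\xi$ does not lift to an extension of $I_{Z'}\otimes L$ by $\mathcal{O}_X$ along any length-one enlargement at any $p\in\mathrm{Supp}(Z)$. By the classical Serre-construction criterion on smooth surfaces, this forces $E$ to be locally free at each such $p$, and therefore everywhere on $X$.

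The main obstacle is the final passage from the global Ext vanishing to the pointwise local-freeness statement. This requires the local-to-global Ext spectral sequence together with a careful verification that the global class $\xi$ descends to a section of $\mathcal{E}xt^1(I_Z\otimes L,\mathcal{O}_X)$ generating each stalk along $Z$. This is a standard but delicate ingredient of the Serre construction (see, e.g., \cite{T87,BS91,TH99}), and Lemma~\ref{filtration} is precisely the tool that reduces the verification to removing one length-one quotient at a single point at a time.
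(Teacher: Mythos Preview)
Your proposal is correct and follows essentially the same route as the paper: both establish $h^1(X,I_Z\otimes(K_X+L))=1$ via Lemma~\ref{filtration} together with the $(d-1)$-very ampleness hypothesis, and then produce the rank~$2$ locally free extension. The only cosmetic difference is that the paper packages the Serre construction and the local-freeness verification into a single citation of Tyurin's result (Theorem~\ref{T87}), observing that the hypotheses make $Z$ a $(K_X+L)$-stable cycle so that Tyurin's extension is automatically locally free of rank $h^1+1=2$; you instead unpack this into the explicit Cayley--Bacharach argument, which is exactly what underlies Tyurin's theorem.
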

\begin{proof}
From the conditions we see that the cycle $Z$ is $\mathcal{O}_X(K_X+L)$-stable in the sense of Tyurin. Then by Theorem \ref{T87} we have a
locally free extension
$$0 \rightarrow H^1(X,(\mathcal{O}_X(K_X+L))\otimes I_Z) \otimes \mathcal {O}_X (K_X)\rightarrow E(Z, (\mathcal{O}_X(K_X+L)))\rightarrow  (\mathcal{O}_X(K_X+L))\otimes I_Z \rightarrow 0,$$
and it is sufficient to prove that $h^1((\mathcal{O}_X(K_X+L))\otimes I_Z)=1$.

By Lemma \ref{filtration}, we can take a sub-cycle $Z_0\subset Z$ of $\deg Z_0=d$. And we have the following diagram with exact rows.
$$\xymatrix{
  0 \ar[r] & (\mathcal{O}_X(K_X+L))\otimes I_{Z_0} \ar[r] & (\mathcal{O}_X(K_X+L)) \ar[r] & (\mathcal{O}_X(K_X+L))|_{Z_0} \ar[r] & 0  \\
  0 \ar[r] & (\mathcal{O}_X(K_X+L))\otimes I_{Z}  \ar[r]\ar@{_{(}->}[u]^i & (\mathcal{O}_X(K_X+L)) \ar[u]^{id}\ar[r] & (\mathcal{O}_X(K_X+L))|_{Z} \ar@{>>}[u]^j \ar[r] & 0
  }.$$
Note that ${\rm kernel}(j)={\rm coker}(i)=k$ and by Tyurin' stability, we have
 $$0=\delta(Z_0, \mathcal{O}_X(K_X+L))<\delta(Z, \mathcal{O}_X(K_X+L))={\rm dim \ coker}(H^0(\mathcal{O}_X(K_X+L))\rightarrow H^0((\mathcal{O}_X(K_X+L))|_{Z}))\leq 1.$$
Then considering the long exact sequence induced by the second row with the vanishing condition $H^1(\mathcal{O}_X(K_X+L))=0$, we obtain
$$H^1((\mathcal{O}_X(K_X+L))\otimes I_Z)={\rm coker}(H^0(\mathcal{O}_X(K_X+L))\rightarrow H^0((\mathcal{O}_X(K_X+L))|_{Z}))=k.$$
\end{proof}
There is a little error in the proof of \cite[ Propositon 4.3]{DG15} that $\rho^*F$ may be not reduced, and here we correct and improve it.
\begin{lem}\label{unstable}
Let $X$ be a quasi-elliptic surface over an algebraically closed field of characteristic $p$, and $F$ be a general fibre of the quasi-elliptic fibration $f:X\rightarrow C$. Let $E$ be a rank $2$ vector bundle on $X$ with $c_1^2(E)-4c_2(E)>0$ then $F^{*}E$ is unstable.
\end{lem}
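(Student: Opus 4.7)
The plan is to argue by contradiction, adapting the strategy from the proof of Theorem \ref{vanishing theorem} to the non-split setting. Suppose $F^{*}E$ is semistable; since the Bogomolov discriminant $c_1^2 - 4c_2$ is preserved under Frobenius pullback, Theorem \ref{Bogomolov} produces a minimal integer $e \geq 2$ such that $F^{e*}E$ is unstable, together with a reduced irreducible purely inseparable cover $\rho \colon Y \to X$ of degree $p^e$, a class $N = A - B \in C_{++}(X)$, and, by Proposition \ref{canonical divisor}, the formula $K_Y \equiv \rho^{*}\!\left(K_X - \tfrac{p^e - 1}{p^e}\, N\right)$.

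Next I would mimic the computation in the proof of Theorem \ref{vanishing theorem}. Consider the induced fibration $g = f \circ \rho \colon Y \to C$ and write $\rho^{*}F = p^{e - e_0}\widetilde{F}$ with $\widetilde{F}$ integral and $0 \leq e_0 \leq e$. Using $K_X \cdot F = 0$ (because the general fiber has $p_a(F) = 1$) together with $\rho^{*}F \cdot \rho^{*}D = p^{e}\,(F \cdot D)$, a direct computation yields
$$-K_Y \cdot \widetilde{F} \;=\; \frac{(p^e - 1)(N \cdot F)}{p^{e - e_0}}.$$
Kodaira's lemma applied to the big class $N$ writes some positive multiple of $N$ as an ample plus an effective divisor; pairing with the nef class $F$ gives $N \cdot F > 0$, hence $N \cdot F \geq 1$. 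Since the left-hand side is an integer and $\gcd(p^{e - e_0}, p^e - 1) = 1$, divisibility forces $p^{e - e_0} \mid N \cdot F$, so $-K_Y \cdot \widetilde{F} = (p^e - 1)\, m$ for some integer $m \geq 1$, and in particular $-K_Y \cdot \widetilde{F} \geq p^e - 1$.

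For the upper bound I would invoke Proposition \ref{keylem} on the fibration $Y \to C$. A quasi-elliptic fiber is a rational curve with a cusp, so the normalization of $F$ is $\mathbb{P}^1$; since $\widetilde{F} \to F$ is the restriction of the purely inseparable map $\rho$, the normalization of $\widetilde{F}$ is also $\mathbb{P}^1$, and the relevant genus in Proposition \ref{keylem} is $p_g = 0$. Hence $-K_Y \cdot \widetilde{F} \leq 2$, which combined with the lower bound above yields $p^e \leq 3$. This contradicts $e \geq 2$ together with the fact that quasi-elliptic fibrations exist only in characteristics $p \in \{2,3\}$, so $F^{*}E$ must already be unstable.

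The main obstacles I anticipate are twofold. First, the hypotheses of Proposition \ref{keylem} need to be verified for $Y$; the LCI property is automatic because $Y$ is a hypersurface in $\mathbb{P}(E)$, but smoothness of $Y$ along the general fiber $\widetilde{F}$ demands a genericity argument ruling out that the singular locus of $Y$ dominates $C$. Second, the integrality step forcing $p^{e - e_0} \mid N \cdot F$ is crucial: the naive estimate $(p^e - 1)(N \cdot F)/p^{e - e_0} \geq (p^e - 1)/p^{e - e_0}$ is too weak to exclude the case of small $e_0$, and without this refinement the comparison with the bound $-K_Y \cdot \widetilde{F} \leq 2$ would not close.
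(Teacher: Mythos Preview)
Your argument is correct and follows essentially the same route as the paper: both invoke Theorem \ref{Bogomolov} and Proposition \ref{canonical divisor} to compute $-K_Y\cdot\widetilde{F}=(p^e-1)(N\cdot F)/p^{e-e_0}$, then bound this by $2$ via Proposition \ref{keylem} to force $e=1$. Your divisibility step (using that $-K_Y\cdot\widetilde{F}\in\bZ$ and $\gcd(p^{e-e_0},p^e-1)=1$ to get $-K_Y\cdot\widetilde{F}\geq p^e-1$) is a clean replacement for the paper's explicit case enumeration, and the smoothness concern you flag for Proposition \ref{keylem} is not addressed any more carefully in the paper's own proof.
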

\begin{proof}
We may assume $E$ is semi-stable. By Theorem \ref{Bogomolov}, let $e>0$ be the smallest integer such that $F^{e*}E$ is unstable:
$$0\rightarrow\mathcal {O}(A)\rightarrow F^{e*}E\rightarrow I_Z\cdot\mathcal {O}(B)\rightarrow0.$$

Considering the composition $g= f \circ \rho: Y\rightarrow C$,
$\rho^*F$ is a family of curves in $Y$ and the general fibre of $g$ may be not reduced: $$\rho^*(F)=p^{e-e_0}\widetilde{F}$$ with $ 0\leq e_0\leq e$ and $\widetilde{F}$ integral.

So by Propostion \ref{canonical divisor} we have
\begin{eqnarray*}-K_Y\cdot \widetilde{F}&=&(\rho^*(\frac{p^e-1}{p^e}(A-B)-K_X)\cdot \widetilde{F})\\
&=&p^{e_0-e}(\rho^*(\frac{p^e-1}{p^e}(A-B)-K_X)\cdot \rho^*F)\\
&=&p^{e_0}(\frac{p^e-1}{p^e}(A-B)-K_X\cdot F)\\
%&=&\frac{p^e-1}{p^{e-e_0}}(A-B\cdot F)\\
&=&(p^e-1)\frac{(A-B\cdot F)}{p^{e-e_0}}>0
\end{eqnarray*}
where the fourth equality is due to that $f$ is a quasi-elliptic fibration and the inequality is because $A-B$ is big.

By Proposition \ref{keylem},
we have  $$(p^e-1)\frac{(A-B\cdot F)}{p^{e-e_0}}=-(K_Y\cdot \widetilde{F})= 2,$$ which gives that
when $p=2$,
\begin{itemize}
  \item $e=1$, $e_0=0$ and $(A-B\cdot F)=4$ or
  \item $e=1$, $e_0=1$ and $(A-B\cdot F)=2$
\end{itemize}
and when $p=3$,
\begin{itemize}
  \item $e=1$, $e_0=0$ and $(A-B\cdot F)=3$ or
  \item $e=1$, $e_0=1$ and $(A-B\cdot F)=1$.
\end{itemize}
So we get our result.
\end{proof}

\begin{bibdiv}
\begin{biblist}

\bib{BS91}{article}{
   author={Beltrametti, M.},
   author={Sommese, A. J.},
   title={Zero cycles and $k$-th order embeddings of smooth projective
   surfaces},
   note={With an appendix by Lothar G\"{o}ttsche},
   conference={
      title={Problems in the theory of surfaces and their classification},
      address={Cortona},
      date={1988},
   },
   book={
      series={Sympos. Math., XXXII},
      publisher={Academic Press, London},
   },
   date={1991},
   pages={33--48},
   review={\MR{1273371}},
}
\bib{C19}{article}{
   author={Chen, Yen-An},
   title={Fujita's conjecture for quasi-elliptic surfaces},
   journal={Math. Nachr.},
   volume={294},
   date={2021},
   number={11},
   pages={2096--2104},
   issn={0025-584X},
   review={\MR{4371285}},
   doi={10.1002/mana.202000522},
}
\bib{CG90}{article}{
   author={Catanese, Fabrizio},
   author={G\oe ttsche, Lothar},
   title={$d$-very-ample line bundles and embeddings of Hilbert schemes of
   $0$-cycles},
   journal={Manuscripta Math.},
   volume={68},
   date={1990},
   number={3},
   pages={337--341},
   issn={0025-2611},
   review={\MR{1065935}},
   doi={10.1007/BF02568768},
}

\bib{DG15}{article}{
   author={Di Cerbo, Gabriele},
   author={Fanelli, Andrea},
   title={Effective Matsusaka's theorem for surfaces in characteristic $p$},
   journal={Algebra Number Theory},
   volume={9},
   date={2015},
   number={6},
   pages={1453--1475},
   issn={1937-0652},
   review={\MR{3397408}},
   doi={10.2140/ant.2015.9.1453},
}

\bib{GZZ}{article}{
   author={Gu, Yi},
   author={Zhang, Lei},
   author={Zhang, Yongming},
   title={Counterexamples to Fujita's conjecture on surfaces in positive
   characteristic},
   journal={Adv. Math.},
   volume={400},
   date={2022},
   pages={Paper No. 108271, 17},
   issn={0001-8708},
   review={\MR{4386546}},
   doi={10.1016/j.aim.2022.108271},
}
\bib{Langer16}{article}{
   author={Langer, Adrian},
   title={The Bogomolov-Miyaoka-Yau inequality for logarithmic surfaces in
   positive characteristic},
   journal={Duke Math. J.},
   volume={165},
   date={2016},
   number={14},
   pages={2737--2769},
   issn={0012-7094},
   review={\MR{3551772}},
   doi={10.1215/00127094-3627203},
}
\bib{Mu13}{article}{
   author={Mukai, Shigeru},
   title={Counterexamples to Kodaira's vanishing and Yau's inequality in
   positive characteristics},
   journal={Kyoto J. Math.},
   volume={53},
   date={2013},
   number={2},
   pages={515--532},
   issn={2156-2261},
   review={\MR{3079312}},
   doi={10.1215/21562261-2081279},
}
\bib{NT93}{article}{
   author={Nakashima, Tohru},
   title={On Reider's method for surfaces in positive characteristic},
   journal={J. Reine Angew. Math.},
   volume={438},
   date={1993},
   pages={175--185},
   issn={0075-4102},
   review={\MR{1215653}},
   doi={10.1515/crll.1993.438.175},
}
\bib{Reider1988}{article}{
   author={Reider, Igor},
   title={Vector bundles of rank $2$ and linear systems on algebraic
   surfaces},
   journal={Ann. of Math. (2)},
   volume={127},
   date={1988},
   number={no.~2},
   pages={309--316},
   issn={0003-486X},
   review={\MR{932299}},
}

\bib{SB91}{article}{
   author={Shepherd-Barron, N. I.},
   title={Unstable vector bundles and linear systems on surfaces in
   characteristic $p$},
   journal={Invent. Math.},
   volume={106},
   date={1991},
   number={2},
   pages={243--262},
   issn={0020-9910},
   review={\MR{1128214}},
   doi={10.1007/BF01243912},
}
\bib{Tango72}{article}{
   author={Tango, Hiroshi},
   title={On the behavior of extensions of vector bundles under the
   Frobenius map},
   journal={Nagoya Math. J.},
   volume={48},
   date={1972},
   pages={73--89},
   issn={0027-7630},
   review={\MR{314851}},
}

\bib{T87}{article}{
   author={Tyurin, A. N.},
   title={Cycles, curves and vector bundles on an algebraic surface},
   journal={Duke Math. J.},
   volume={54},
   date={1987},
   number={1},
   pages={1--26},
   issn={0012-7094},
   review={\MR{885772}},
   doi={10.1215/S0012-7094-87-05402-0},
}
\bib{TH99}{article}{
   author={Terakawa, Hiroyuki},
   title={The $d$-very ampleness on a projective surface in positive
   characteristic},
   journal={Pacific J. Math.},
   volume={187},
   date={1999},
   number={1},
   pages={187--199},
   issn={0030-8730},
   review={\MR{1674325}},
   doi={10.2140/pjm.1999.187.187},
}
\bib{Zh22}{article}{
   author={Zhang, Yongming},
   title={Strong non-vanishing of cohomologies and strong non-freeness of adjoint line bundles on $n$-Raynaud surfaces},
   journal={arXiv:2210.00967},
   date={2022},
}
\bib{Zheng17}{article}{
   author={Zheng, Xudong},
   title={Counterexamples of Kodaira vanishing for smooth surfaces of
   general type in positive characteristic},
   journal={J. Pure Appl. Algebra},
   volume={221},
   date={2017},
   number={10},
   pages={2431--2444},
   issn={0022-4049},
   review={\MR{3646309}},
   doi={10.1016/j.jpaa.2016.12.030},
}

\end{biblist}
\end{bibdiv}
\end{document}